\documentclass[12pt,english]{article}
\usepackage[letterpaper]{geometry}
\geometry{top=1.5in, bottom=1in, left=.8in, right=.8in}
\usepackage[utf8]{inputenc}
\usepackage{amsmath}
\usepackage{amsthm}
\usepackage{amssymb}
\usepackage{authblk}
\usepackage{hyperref}
\usepackage{tikz}

\providecommand{\keywords}[1]{\textbf{\textit{Keywords:}} #1}

\newcommand{\R}{\mathbb R}
\newcommand{\B}{\mathcal B}

\usepackage{float}
\usepackage{babel,blindtext}
\usepackage{graphicx}

\newtheorem{theorem}{Theorem}[section]

\newtheorem{proposition}[theorem]{Proposition}
\newtheorem{definition}[theorem]{Definition}

\title{B\'ezout coefficients of coprime numbers approximate quadratic Bézier curves}
\author[1]{\sc{Benjam\'in A.~Itz\'a-Ortiz} \thanks{Corresponding author: {\tt itza@uaeh.edu.mx}}} 
\author[1]{\sc{Roberto L\'opez-Hern\'andez}}
\author[2]{\sc{Pedro Miramontes}}

\affil[1]{{\small{Centro de Investigaci\'on en Matem\'aticas, Universidad Aut\'onoma del Estado de Hidalgo, Pachuca, Hidalgo, \sc{Mexico}}}}

\affil[2]{{\small{Facultad de Ciencias, Universidad Nacional Aut\'onoma de M\'exico, Mexico City, \sc{Mexico}}}}


\date{December 17th, 2020}

\begin{document}

\maketitle

\begin{abstract}
   Given a point $(p,q)$ with  nonnegative integer coordinates and $p\not=q$,  we prove that the quadratic B\'ezier curve relative to the points $(p,q)$, $(0,0)$ and $(q,p)$ is approximately the envelope of a family of segments whose endpoints are the B\'ezout coefficients of  coprime numbers belonging to neighborhoods of  $(p,q)$ and $(q,p)$, respectively. 
\end{abstract}

\keywords{B\'ezier quadratic curve, B\'ezout coefficients, coprime numbers.}

\section*{Introduction}
B\'ezier curves are parametrized curves extensively used in computer aided geometric design (CAGD) \cite{Mortenson}. B\'ezier curves are related to Berenstein polynomials, they have very interesting mathematical properties and the literature contains many proposals of algorithms for their algebraic construction and their visualization \cite{Farin,winkel}. In this work, we will focus on quadratic B\'ezier curves. Since a quadratic B\'ezier curve  may be described as the envelope of a family of segments whose endpoints move along  two straight lines, then it is conceivable to replace this family of lines with another suitable family of lines whose corresponding endpoints are close to each other. When the quadratic B\'ezier curve is relative to the points $(p,q)$, $(0,0)$ and $(q,p)$, it turns out that such replacement lines may be characterized as having as endpoints pairs of B\'ezout coefficients of coprime numbers close to the points  $(p,q)$ and $(q,p)$, respectively. The main observation that leads to the verification of this statement  is fairly simple: Denote $P_0=(p,q)$, $P_1=(0,0)$ and $P_2=(q,p)$ and suppose that the coprime pair of positive numbers $(r,s)$ is close to the pair $(p,q)$. Then,  the B\'ezout coefficients $Q_1$ and $Q_2$ of the two coprime pairs  $(r,s)$ and $(s,r)$, respectively, are close to their projections $R_1$ and $R_2$ on the lines $P_0P_1$ and $P_1P_2$, respectively. Moreover, the distance from $R_1$ to $P_0$ is approximately the same as the distance from $R_2$ to $P_1$. Hence, the line with endpoints $Q_1$ and $Q_2$ is the candidate to replace a line in the envelope of the quadratic B\'ezier curve associated to $P_0$, $P_1$ and $P_2$. When the norm of $(p,q)$ is sufficiently large, there will be many coprime pairs in a neighbourhood of $(p,q)$. Thus, when plotting all such lines with endpoints $Q_1$ and $Q_2$, they will approximate the quadratic B\'ezier curve corresponding to $P_0$, $P_1$ and $P_2$.

This work is divided in two sections. In Section 1 we establish the notation and state the elementary results needed in the rest of the paper and Section 2 contains the main results. 

\section{B\'ezier Quadratics}

The linear B\'ezier curve for two points ${P}_0$ and ${P}_1$ in the plane, is defined as just the parametrized line $t\mapsto (1-t) {P}_0+t {P}_1$ from $P_0$ to $P_1$.
Given three different points $ {P}_0$, $ {P}_1$ and $ {P}_2$ in the plane, the quadratic B\'ezier curve is a parametrized curve in terms of the linear B\'ezier curves from $ {P}_0$ to $ {P}_1$ and from $ {P}_1$ to $ {P}_2$, that is,
the quadratic B\'ezier curve for the three points $ {P}_0$, $ {P}_1$ and ${P}_2$ in the Cartesian plane,  is the parametric curve: 
\begin{align*}
t&\mapsto (1-t)\left((1-t) {P}_0+t {P}_1\right)+t\left((1-t) {P}_1+t {P}_2\right) 
=(1-t)^{2} {P} _{0}+2(1-t)t {P} _{1}+t^{2} {P} _{2}.
\end{align*}

\begin{definition}\label{Bezier}
Given a pair of nonnegative integers $(p,q)$,   we will denote
by $\alpha_{p,q}$ and $\beta_{p,q}$ the linear B\'ezier curves  corresponding to the pairs of points  $(p,q)$, $(0,0)$ and $(0,0)$, $(q,p)$, respectively. That is
\begin{equation*}
    \alpha_{p,q}(t)=(1-t)(p,q)\hspace*{1cm}\text{ and }\hspace*{1cm}
    \beta_{p,q}(t)=t(q,p).
\end{equation*}
In addition, for each $0\leq s \leq 1$ we will denote by $\gamma_{s}$ the parametric line through the points $\alpha_{p,q}(s)$ and $\beta_{p,q}(s)$. Therefore
\begin{equation*}
    \gamma_s(t)=(1-t)\alpha_{p,q}(s)+t\beta_{p,q}(s).
\end{equation*}
The quadratic B\'ezier curve 
$c_{p,q}$ corresponding to  $P_0=(p,q)$, $P_1=(0,0)$ and $ P_2=(q,p)$, is then defined as
\begin{equation*}
    c_{p,q}(t)=(1-t)\alpha_{p,q}(t)+t\beta_{p,q}(t)= (1-t)^2(p,q)+t^2(q,p){\mbox{ , }}0\leq t\leq 1.
\end{equation*}
\end{definition}

For each $0< t_0< 1$, it is well known that a parametrization of the straight line tangent to $c_{p,q}(t)$ at the point $c_{p,q}(t_0)$, when $t=t_0$, is the line $\gamma_{t_0}$. Indeed, a parametrization of  such a tangent line is:
\begin{align*}
    c_{p,q}(t_0)+\dfrac{1}{2}(t-t_0)c^{\prime}_{p,q}(t_0)&= 
    c_{p,q}(t_0)+(t-t_0)\left(-(1-t_0)(p,q)+t_0(q,p)\right)\\
    &=(1-t_0)\left((1-t_0)-(t-t_0)\right)(p,q)+t_0\left(t_0+(t-t_0)\right)(q,p)\\
    &=(1-t)\gamma_{p,q}(t_0)+t\beta_{p,q}(t_0)\\
    &=\gamma_{t_0}(t).
\end{align*}

In fact, $c_{p,q}$ may be described as the envelope of the family of lines $\gamma_s$.

The following notion was introduced  as  Definition~1
in \cite{ILM}.
\begin{definition}\label{defbezout}
Given a pair of positive coprime integers $(p,q)$, we define the {\em B\'ezout coefficients} of $(p,q)$ 
as the unique pair of coprime numbers, denoted $\mathcal B(p,q)=(a,b)$, such that $0<a \leq p$, $0\leq b < q$  and  satisfy  {B\'ezout's identity}
\begin{equation*}
    aq-bp=1.
\end{equation*}

 \end{definition}

The following proposition gives a couple of formulas relating the B\'ezout coefficients of a coprime pair and the B\'ezout coefficients of the flipped pair. 

\begin{proposition}\label{prop}
Let $(p,q)$ be a coprime pair of nonnegative integers. If $\B(p,q)=(a,b)$ then following identities hold.
    \begin{enumerate} 
    \item  $\B(q,p)=(q-b,p-a)$.
    \item  $\B(b+q,a+p)=(q,p)$.
    \end{enumerate}
\end{proposition}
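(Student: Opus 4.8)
The plan is to prove both identities by exhibiting an explicit candidate for the relevant Bézout coefficients and then invoking the uniqueness built into Definition~\ref{defbezout}. Write $\B(p,q)=(a,b)$, so that $0<a\le p$, $0\le b<q$ and $aq-bp=1$. First I would record that the standing hypothesis (namely that $\B(p,q)$ exists) already forces $p\ge 1$, from $0<a\le p$, and $q\ge 1$, from $aq=1+bp\ge 1$; this is the observation that makes the range inequalities below come out in the correct strict/non-strict form.

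For part (1), the candidate is $(q-b,\,p-a)$, and I would verify the three conditions characterizing $\B(q,p)$. The inequalities $0<q-b\le q$ hold because $0\le b<q$, and $0\le p-a<p$ hold because $0<a\le p$. The Bézout identity for the pair $(q,p)$ is the computation
\begin{equation*}
(q-b)p-(p-a)q=qp-bp-pq+aq=aq-bp=1,
\end{equation*}
and coprimality of $(q-b,p-a)$ is automatic, since any common divisor of $q-b$ and $p-a$ divides the left-hand side and hence divides $1$. By the uniqueness clause of Definition~\ref{defbezout} applied to $(q,p)$, this forces $\B(q,p)=(q-b,p-a)$.

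For part (2), the candidate for $\B(b+q,\,a+p)$ is $(q,p)$. Here I would first note that $\B(b+q,a+p)$ is well defined: any common divisor of $a+p$ and $b+q$ divides
\begin{equation*}
q(a+p)-p(b+q)=qa+qp-pb-pq=aq-bp=1,
\end{equation*}
so $b+q$ and $a+p$ are coprime. The same display is exactly the required Bézout identity for the pair $(b+q,a+p)$ with coefficients $(q,p)$. The inequalities $0<q\le b+q$ hold because $q\ge 1$ and $b\ge 0$, and $0\le p<a+p$ hold because $p\ge 0$ and $a>0$; and $(q,p)$ is coprime by hypothesis. Uniqueness in Definition~\ref{defbezout}, now applied to $(b+q,a+p)$, gives $\B(b+q,a+p)=(q,p)$.

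There is no genuine obstacle in this argument: all the content lies in matching each candidate against the three defining conditions of $\B$ for the appropriate flipped or shifted pair, and the only point that needs a little care is the bookkeeping that the range inequalities hold in precisely the (non-)strict form the definition demands — which is where the remarks $p\ge 1$ and $q\ge 1$ get used. Once a valid candidate is produced, uniqueness does the rest.
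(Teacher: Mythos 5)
Your proposal is correct and follows essentially the same route as the paper: verify the Bézout identity $aq-bp=1$ for the candidate pair, check the range inequalities, and conclude by the uniqueness in Definition~\ref{defbezout} (the paper cites a textbook result for the coprimality in part (2), where you derive it directly from the identity). Your explicit remark that the existence of $\B(p,q)$ forces $p\ge 1$ and $q\ge 1$ is a small tidy addition, not a different argument.
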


\begin{proof}
   Let $p\geq 1$ and $q\geq 1$ be relatively prime positive numbers. Let us assume $B(p,q)=(a,b)$, so that $aq-bp=1$, $0<a\leq p$ and $0\leq b<q$. For part $(1)$, we calculate
   \begin{align*}
       p(q-b)-q(p-a)&=aq-bp\\
                    &=1.
   \end{align*}
Therefore, as $0\leq p-a < p$ and $0< q-b\leq q$, we obtain 
\begin{equation*}
    \B(q,p) =(q-b,p-a),\\
\end{equation*}
as wanted.

To prove part $(2)$, we calculate
   \begin{align*}
       q(a+p)-p(b+q)&=aq-bp\\
                    &=1.
   \end{align*}
Therefore $p+a$ y  $b+q $ are relatively prime \cite[Theorem~5.1]{niven}. Since also the inequalities $0<q\leq b+q$ and $0\leq p \leq a+q$ hold,   we obtain
\begin{align*}
    \B(b+q,a+p)&=(q,p),\\
\end{align*}
as was to be proved.\qedhere
 
\end{proof}

We now prove a result that allows to argue why the points $\B(r,s)$ and $\B(s,r)$  are suitable substitutes for the endpoints of the envelope of the family of lines $\gamma_s$  of the quadratic B\'ezier curve $c_{p,q}$ relative to $(p,q)$, $(0,0)$ and $(q,p)$, when $(r,s)$ is close to $(p,q)$. More precisely, we prove that the distances from the projections of $\B(p,q)$ and $\B(q,p)$ on the lines $y=\frac{q}{p}x$ and $y=\frac{p}{q}x$ to $(p,q)$ and $(0,0)$, respectively, are equal. Furthermore, the distances from $\B(p,q)$ and $\B(q,p)$ to the lines $y=\dfrac{q}{p}x$ and $y=\dfrac{p}{q}x$ are both equal to the inverse of the norm of $(p,q)$.  It will be proved in the next section why such segments with endpoints $\B(r,s)$ and $\B(s,r)$  plays the role of a segment $\gamma_{s}$ in defining the  quadratic B\'ezier curve $c_{p,q}$.

\begin{proposition}\label{mainprop}
Let $p$ and $q$ be relatively prime positive numbers. The following conditions hold.
   \begin{enumerate}
    \item  $\|\B(p,q)-(p,q)\|=\|\B(q,p)-(0,0)\|$.
    \item The distance from the point $\B(p,q)$ to the line $y=\dfrac{q}{p}x$  and the distance from the point $\B(q,p)$ to the line $y=\dfrac{p}{q}x$ are both  equal to $\dfrac{1}{\sqrt{p^2+q^2}}$. 
    \item Let $t_0=\dfrac{\B(p,q)\cdot (p,q)}{\|(p,q)\|^2}$. Then projection of the point $\B(p,q)$ on the line $y=\dfrac{q}{p}x$ is  $t_0(p,q)$ while the projection of the point $\B(q,p)$ on the line $y=\dfrac{p}{q}x$ is $(1-t_0)(q,p)$. 
    \item The distance from the point $(p,q)$ to the projection of $\B(p,q)$ on the line $y=\dfrac{q}{p}x$ is
    equal to the distance from the origin $(0,0)$ to the projection of $\B(q,p)$  on $y=\dfrac{p}{q}x$.
    \end{enumerate}
\end{proposition}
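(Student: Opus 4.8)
The plan is to reduce all four assertions to a single short computation built on the defining B\'ezout identity $aq-bp=1$, where $(a,b)=\B(p,q)$, together with the formula $\B(q,p)=(q-b,p-a)$ supplied by Proposition~\ref{prop}(1). So I would open by fixing notation: write $(a,b)=\B(p,q)$, so that $0<a\le p$, $0\le b<q$ and $aq-bp=1$, and record $\B(q,p)=(q-b,p-a)$. Part (1) is then immediate by expanding the squared norms: $\|\B(p,q)-(p,q)\|^2=(a-p)^2+(b-q)^2$ while $\|\B(q,p)-(0,0)\|^2=(q-b)^2+(p-a)^2$, and these coincide term by term.

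For part (2) I would use the point--line distance formula. Writing the line $y=\tfrac qp x$ as $qx-py=0$, the distance from $(a,b)$ is $|qa-pb|/\sqrt{p^2+q^2}=1/\sqrt{p^2+q^2}$ by B\'ezout's identity; writing $y=\tfrac pq x$ as $px-qy=0$, the distance from $\B(q,p)=(q-b,p-a)$ is $|p(q-b)-q(p-a)|/\sqrt{p^2+q^2}$, whose numerator collapses to $|aq-bp|=1$ after expansion. For part (3) I would recall that the orthogonal projection of a vector $v$ onto the line $\R w$ through the origin equals $\tfrac{v\cdot w}{\|w\|^2}\,w$. Applying this with $v=\B(p,q)$ and $w=(p,q)$ gives $t_0(p,q)$ by the very definition of $t_0$. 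Applying it with $v=\B(q,p)=(q-b,p-a)$ and $w=(q,p)$, the numerator $(q-b)q+(p-a)p$ equals $(p^2+q^2)-(ap+bq)$, so the projection is $\bigl(1-\tfrac{ap+bq}{p^2+q^2}\bigr)(q,p)=(1-t_0)(q,p)$. Here I would also note $0<t_0<1$, using $0<a\le p$ and $0\le b<q$, so that the points in parts (3)--(4) genuinely lie between the relevant endpoints.

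Part (4) is then a one-line consequence of part (3): the distance from $(p,q)$ to $t_0(p,q)$ is $(1-t_0)\sqrt{p^2+q^2}$, and the distance from $(0,0)$ to $(1-t_0)(q,p)$ is $\|(1-t_0)(q,p)\|=(1-t_0)\sqrt{p^2+q^2}$, so the two are equal. I do not anticipate any real obstacle; the only points requiring care are bookkeeping the signs inside the absolute values (which is exactly why recording $aq-bp=1>0$ and $0<t_0<1$ is worthwhile) and invoking Proposition~\ref{prop}(1) for $\B(q,p)$ rather than rederiving it.
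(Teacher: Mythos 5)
Your proposal is correct and follows essentially the same route as the paper's own proof: reduce everything to the identity $aq-bp=1$ together with $\B(q,p)=(q-b,p-a)$, use the point--line distance and orthogonal projection formulas for (2)--(3), and deduce (4) by computing both distances as $(1-t_0)\sqrt{p^2+q^2}$, which is exactly the paper's second (direct) argument for (4). Your added remark that $0<t_0<1$ is a harmless bonus the paper defers to the next proposition.
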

\begin{proof}
   
Let $p$ and $q$  be relatively prime positive integers. Assume $\B(p,q)=(a,b)$. From part (1) in Proposition~\ref{prop} we see that $\B(q,p)=(q-b,p-a)$. Hence
\begin{align*}
    \|\B(q,p) -(0,0)\| &= \sqrt{(q-b)^2+(p-a)^2}\\
                       &=\| (q,p)-(b,a)\|\\
                       &= \| (a,b)-(p,q)\|\\
                       &=\| \B(p,q)-(p,q)\|,
\end{align*}
thus proving (1).

For $(2)$, we compute the distance from $\B(p,q)=(a,b)$ to $y=\dfrac{q}{p}x$ to be 
\begin{align*}
    \dfrac{\left| aq-bp\right|}{\sqrt{p^2+q^2}} &= \dfrac{1}{\sqrt{p^2+q^2}}.
\end{align*}
 Similarly, if $\B(p,q)=(a,b)$ then the distance from $\B(q,p)=(q-b,p-a)$ to $y=\dfrac{p}{q}x$ is
\begin{align*}
    \dfrac{\left| p(q-b)-q(p-a)\right|}{\sqrt{p^2+q^2}} &= \dfrac{1}{\sqrt{p^2+q^2}}.
\end{align*}

Now, to prove (3), let us assume again that $\B(p,q)=(a,b)$. By Proposition~\ref{prop} we then have $\B(q,p)=(q-b,p-a)$. The coordinates of the projection of   $\B(p,q)$ on $y=\dfrac{q}{p}x$ are nothing but
\begin{equation*}
    \dfrac{\B(p,q)\cdot (p,q)}{\|(p,q)\|^2} (p,q)=t_0(p,q),
\end{equation*}
while the coordinates of the projection of $B(q,p)$ on $y=\dfrac{p}{q}x$ are
\begin{equation*}
    \dfrac{\B(q,p)\cdot (q,p)}{\|(q,p)\|^2} (q,p).
\end{equation*}
 By computing
 \begin{align*}
     1-t_0&=1-\dfrac{\B(p,q)\cdot (p,q)}{\|(p,q)\|^2} \\
     &=\dfrac{(p,q)\cdot (p,q)}{\|(p,q)\|^2}-\dfrac{(a,b)\cdot (p,q)}{\|(p,q)\|^2}\\
     &=\dfrac{(p-a,q-b)\cdot (p,q)}{\|(p,q)\|^2}\\
     &=\dfrac{\B(q,p)\cdot (q,p)}{\|(p,q)\|^2}\\
\end{align*}
 we complete the proof of $(3)$.

  Finally, for the proof of $(4)$, notice that the triangle with vertices $\B(p,q)$, $(p,q)$ and the point on the line $qx-py=0$ closest to $\B(p,q)$ is congruent to the  triangle with vertices $\B(q,p)$, $(0,0)$ and the point on the line $px-qy=0$ closest to $\B(q,p)$, by parts (1) and (2). Thus, (4) follows.  We may also obtain the result by direct computation using part $(3)$  as follows.
\begin{align*}
    \left\|\dfrac{\B(q,p)\cdot (q,p)}{\|(q,p)\|^2} (q,p) -(0,0)  \right\|
    &= \left\| (1-t_0) (q,p)   \right\|  \\
    &= \left\| (q,p)-t_0(q,p)
     \right\| \\
    &= \left\|
      t_0(p,q)-(p,q)  
    \right\| \\
    &=\left\|
    \dfrac{\B(p,q)\cdot (p,q)}{\|(p,q)\|^2} (p,q) -(p,q)
    \right\|,
\end{align*}
as was to be proved.
\end{proof}

Since given a coprime pair $(p,q)$ the line with endpoints $\B(p,q)$ and $\B(q,p)$ will play an important role in the sequel, we introduce a notation for it next.

\begin{definition}\label{bezoutlines}
For a pair $(p,q)$ of relatively prime positive numbers, we will denote by  $L_{p,q}$  the linear B\'ezier curve from $B(p,q)$ to  $B(q,p)$, that is, $L_{p,q}(t)=(1-t)\,\B(p,q)+t\, \B(q,p)$. We will call $L_{p,q}$ the B\'ezier-B\'ezout segment corresponding to $(p,q)$.
\end{definition}

Finally, we will define what we mean for two segments to be ``close''.

\begin{definition}
Let $L_i$ be a segment with endpoints at $A_i$ and $B_i$, for $i=1,2$. We 
define the distance from $L_1$ to $L_2$
as \[
\mathrm{dist}(L_1,L_2)=\max\left\{
        \min\{\|A_1-A_2\|,\|A_1-B_2\|\},
        \min\{\|B_1-A_2\|,\|B_1-B_2\|\}
        \right\}.
\]
\end{definition}

We have the following straightforward result.

\begin{proposition}\label{points}
   Let $L_i$ be a segment in $\R^n$ with end points at $A_i$ and $B_i$, for $i=1,2$. Suppose that $\|A_1-A_2\|=\min\{ \|A_1-A_2\|, \|A_1-B_2\|\}$ and $\|B_1-B_2\|=\min\{ \|B_1-A_2\|,\|B_1-B_2\|\}$. Consider $\gamma_i(t)=(1-t)A_i+tB_i$  a parametrization of $L_i$. If  $\mathrm{dist}(L_1,L_2)<\epsilon$, for some $\epsilon>0$, then $\|\gamma_1(t)-\gamma_2(t)\|<\epsilon$, for every $0\leq t\leq 1$.
\end{proposition}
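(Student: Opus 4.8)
The plan is to first unwind the definition of $\mathrm{dist}(L_1,L_2)$ under the stated hypotheses, and then reduce the claim to an elementary convexity estimate.

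First I would observe that the two minimum conditions in the hypothesis say precisely that $\min\{\|A_1-A_2\|,\|A_1-B_2\|\}=\|A_1-A_2\|$ and $\min\{\|B_1-A_2\|,\|B_1-B_2\|\}=\|B_1-B_2\|$. Substituting these into the definition of the distance between segments gives $\mathrm{dist}(L_1,L_2)=\max\{\|A_1-A_2\|,\|B_1-B_2\|\}$. Hence the hypothesis $\mathrm{dist}(L_1,L_2)<\epsilon$ is equivalent to the two inequalities $\|A_1-A_2\|<\epsilon$ and $\|B_1-B_2\|<\epsilon$ holding simultaneously.

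Next I would write, for $0\le t\le 1$,
\[
\gamma_1(t)-\gamma_2(t)=\bigl((1-t)A_1+tB_1\bigr)-\bigl((1-t)A_2+tB_2\bigr)=(1-t)(A_1-A_2)+t(B_1-B_2),
\]
and then apply the triangle inequality together with $1-t\ge 0$ and $t\ge 0$ to obtain
\[
\|\gamma_1(t)-\gamma_2(t)\|\le (1-t)\|A_1-A_2\|+t\|B_1-B_2\|.
\]
Since $(1-t)+t=1$ and each of the two norms on the right-hand side is strictly smaller than $\epsilon$, the right-hand side is a convex combination of numbers smaller than $\epsilon$, hence itself strictly smaller than $\epsilon$. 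This gives $\|\gamma_1(t)-\gamma_2(t)\|<\epsilon$ for every $t\in[0,1]$, as claimed.

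There is essentially no serious obstacle here; the only point requiring a little care is the passage from the definition of $\mathrm{dist}(L_1,L_2)$ to the two separate bounds $\|A_1-A_2\|<\epsilon$ and $\|B_1-B_2\|<\epsilon$, which is exactly where the two minimum hypotheses are used. Without them one could only guarantee that a parametrization of $L_1$ stays uniformly close to a parametrization of $L_2$ after possibly reversing the orientation of one of the two segments, which is why this orientation-compatibility assumption is built into the statement.
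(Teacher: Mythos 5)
Your proof is correct and follows essentially the same route as the paper: both decompose $\gamma_1(t)-\gamma_2(t)=(1-t)(A_1-A_2)+t(B_1-B_2)$, use the two minimum hypotheses to relate the endpoint distances to $\mathrm{dist}(L_1,L_2)$, and conclude by the convex-combination (triangle inequality) estimate. The only cosmetic difference is that you first rewrite $\mathrm{dist}(L_1,L_2)$ as $\max\{\|A_1-A_2\|,\|B_1-B_2\|\}$, whereas the paper substitutes the minima directly into the chain of inequalities.
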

\begin{proof}
   Assume that $\mathrm{dist}(L_1,L_2)<\epsilon$. Let $0\leq t\leq 1$. Then
   \begin{align*}
    \|\gamma_1(t)-\gamma_2(t)\|&\leq
    (1-t)\|A_1-A_2\|+t\|B_1-B_2\|\\
    &=(1-t)\min\{ \|A_1-A_2\|, \|A_1-B_2\|\} +
      t\min\{ \|B_1-A_2\|,\|B_1-B_2\|\}\\
      &\leq (1-t)\mathrm{dist}(L_1,L_2)
           +t\, \mathrm{dist}(L_1,L_2)\\
      &<\epsilon.
  \end{align*}
\end{proof}

\section{Approximating a quadratic B\'ezier curve}

In this section we prove our main result, namely, that a quadratic B\'ezier curve $c_{p,q}$ as in Definition~\ref{Bezier} can be approximated through a family of  B\'ezier-B\'ezout line segments given
in Definition~\ref{bezoutlines}.

For integers $p>3$ and $0\leq q <p$, our next proposition will show how for a given $1\leq\epsilon\leq\frac{1}{2}\|(p,q)\|$ and any pair of relatively prime numbers $(r,s)$ in a neighborhood of radius $\epsilon$ and center $(p,q)$  one gets a real number $0< t_0< 1$ such that both the distances from $\B(r,s)$  to $\alpha_{p,q}(t_0)$ and  from $\B(s,r)$ to $\beta_{p,q}(t_0)$ are less to $\epsilon +1$. Since $\B(r,s)$ and $\B(s,r)$ are the endpoints of the B\'ezier-B\'ezout line $L_{r,s}$,  $\alpha_{p,q}(t_0)$ and $\beta_{p,q}(t_0)$ are the endpoints of $\gamma_{t_0}$, and the B\'ezout quadratic $c_{p,q}$ is the envelope of the family of lines $\gamma_{s}$, this proposition will prove to  be crucial for our main result.

\begin{proposition}\label{aprox}
   Consider integers $p>3$ and $0\leq q<p$  
   and let $1\leq \epsilon\leq \frac{1}{2}\|(p,q)\|$ be a real number. If $(r,s)$ is a pair of positive coprime numbers and  $\|(r,s)-(p,q)\|\leq \epsilon$,  
   then for $0<t_{0}=1-\dfrac{\B(r,s)\cdot (r,s)}{\|(r,s)\|^2}<1$, one has
   \[
   \|\B(r,s)-\alpha_{p,q}(t_{0})\|< \epsilon+1 \hspace*{0.3cm}\text{ and }\hspace*{0.3cm}
   \|\B(s,r)-\beta_{p,q}(t_{0})\|<
    \epsilon+1,
   \]
   where  $\alpha_{p,q}(t)=(1-t)(p,q)$ and $\beta_{p,q}(t)=t(q,p)$ are the parametrized lines given in Definition~\ref{Bezier}. 
\end{proposition}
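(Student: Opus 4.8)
The plan is to reduce everything to Proposition~\ref{mainprop} applied to the coprime pair $(r,s)$, and then control the error introduced by replacing $(r,s)$ with the nearby point $(p,q)$. First I would write $t_0 = 1 - \frac{\B(r,s)\cdot(r,s)}{\|(r,s)\|^2}$; by part (3) of Proposition~\ref{mainprop} (applied with the roles of $p,q$ played by $r,s$), the projection of $\B(r,s)$ onto the line $y=\frac{s}{r}x$ is exactly $(1-t_0)(r,s)$, and the projection of $\B(s,r)$ onto $y=\frac{r}{s}x$ is $t_0(s,r)$. Meanwhile part (2) tells me the perpendicular distances from $\B(r,s)$ and $\B(s,r)$ to these respective lines are both $\frac{1}{\|(r,s)\|}$. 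So by the Pythagorean theorem,
\[
\|\B(r,s) - (1-t_0)(r,s)\| = \frac{1}{\|(r,s)\|}
\quad\text{and}\quad
\|\B(s,r) - t_0(s,r)\| = \frac{1}{\|(r,s)\|}.
\]
Since $\|(r,s)-(p,q)\|\le\epsilon\le\frac12\|(p,q)\|$, the triangle inequality gives $\|(r,s)\|\ge\frac12\|(p,q)\|\ge\frac12\|(3,0)\|>1$ (using $p>3$), so both of these error terms are bounded by $1$.

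Next I would bound $\|(1-t_0)(r,s) - \alpha_{p,q}(t_0)\| = (1-t_0)\,\|(r,s)-(p,q)\| \le (1-t_0)\epsilon \le \epsilon$, using $0<t_0<1$; similarly $\|t_0(s,r) - \beta_{p,q}(t_0)\| = t_0\,\|(s,r)-(q,p)\| = t_0\,\|(r,s)-(p,q)\| \le \epsilon$, since flipping coordinates is an isometry. Combining with the previous paragraph via the triangle inequality yields
\[
\|\B(r,s)-\alpha_{p,q}(t_0)\| \le \frac{1}{\|(r,s)\|} + \epsilon < 1 + \epsilon,
\]
and likewise for $\|\B(s,r)-\beta_{p,q}(t_0)\|$, which is the claim.

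There are two loose ends to tie up. First, I must verify that $0<t_0<1$ genuinely holds, i.e.\ that the stated hypothesis is consistent: since $\B(r,s)=(a,b)$ satisfies $0<a\le r$ and $0\le b<s$, the dot product $\B(r,s)\cdot(r,s)=ar+bs$ is strictly positive and strictly less than $r^2+s^2=\|(r,s)\|^2$ (strictly, because $a\le r$ with $b<s$ forces $ar+bs<r^2+s^2$ unless $a=r$, but $a=r$ together with $ar-bs=\cdots$ — here I should be careful and instead note $ar+bs \le r\cdot r + (s-1)s < r^2+s^2$), so $0<t_0<1$ automatically. Second, I should double-check that the hypothesis $\|(r,s)-(p,q)\|\le\epsilon$ together with $r,s$ positive is compatible with $p>3$, $0\le q<p$, so that the neighborhood actually contains coprime pairs with positive coordinates — but this is a setup remark rather than part of the inequality. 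The only real subtlety, and the place I would be most careful, is the chain of estimates showing $\|(r,s)\| > 1$; everything there hinges on the bound $\epsilon \le \frac12\|(p,q)\|$ being exactly what is needed to keep $(r,s)$ bounded away from the origin, and on $p>3$ guaranteeing $\|(p,q)\|\ge 4$. The rest is routine triangle-inequality bookkeeping.
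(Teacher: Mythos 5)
Your proposal is correct and follows essentially the same route as the paper: invoke Proposition~\ref{mainprop} (2) and (3) for the pair $(r,s)$ to get $\|\B(r,s)-(1-t_0)(r,s)\|=\|\B(s,r)-t_0(s,r)\|=\frac{1}{\|(r,s)\|}$, then split off the terms $(1-t_0)\|(r,s)-(p,q)\|$ and $t_0\|(s,r)-(q,p)\|\le\epsilon$ by the triangle inequality, and bound $\frac{1}{\|(r,s)\|}<1$ using $\|(r,s)\|\ge\|(p,q)\|-\epsilon\ge\frac12\|(p,q)\|>1$. Your explicit verification that $0<t_0<1$ via $0<ar+bs\le r^2+(s-1)s<r^2+s^2$ is in fact slightly more careful than the paper's brief remark about the vector resolution.
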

\begin{proof}
Let $r$ and $s$ be an integers such that $r$ is relatively prime to $p$, $s$ is relatively prime to $q$ and  such that $\|(r,s)-(p,q)\|\leq \epsilon$.  Suppose that $\B(r,s)=(a,b)$. Then, by Definition~\ref{defbezout}, we have the inequalities $0\leq a <r$ and $0<b\leq s$. 
We obtain that the vector resolution of $\B(r,s)$ in the direction of $(r,s)$ has length less than $\|(r,s)\|$, that is, $0< \dfrac{\B(r,s)\cdot (r,s)}{\|(r,s)\|}< \|(r,s)\|$. Thus $0< t_0 =1-\dfrac{\B(r,s)\cdot (r,s)}{\|(r,s)\|^2}< 1$.

By Proposition~\ref{mainprop}~(2) and (3), we have $\|\B(r,s)-(1-t_0)(r,s)\|=\dfrac{1}{\sqrt{r^2+s^2}}$. Thus
\begin{align*}
    \|\B(r,s)-\alpha_{p,q}(t_0)\| &=
    \| \B(r,s)-(1-t_0)(p,q)\|\\
    &\leq \|\B(r,s)-(1-t_0)(r,s)\| + \|(1-t_0)(r,s)-(1-t_0)(p,q)\|\\
    &=\dfrac{1}{\sqrt{r^2+s^2}}+(1-t_0)\, \|(r,s)-(p,q)\|\\
    &< \dfrac{1}{\sqrt{p^2+q^2}-\epsilon} + \epsilon\\
    &<\epsilon +1
\end{align*}

On the other hand, using Proposition~\ref{prop}, we get $\B(s,r)=(s-b,r-a)$. By Proposition~\ref{mainprop} (2) and (3) we also get  $\left\|\B(s,r)-\left(1-\dfrac{\B(r,s)\cdot (r,s)}{\|(r,s)\|^2}\right)   (s,r)\right\|=\dfrac{1}{\sqrt{r^2+s^2}}$ . Thus
\begin{align*}
    \|\B(s,r)-\beta_{p,q}(t_0)\| &=
    \| \B(s,r)-t_0(q,p)\|\\&\leq \|\B(s,r)-t_0(s,r)\| + \|t_0(s,r)-t_0(q,p)\|\\
    &=\left\|\B(s,r)-\left(1-\dfrac{\B(r,s)\cdot (r,s)}{\|(r,s)\|^2}\right)   (s,r)\right\| +t_0\, \|(s,r)-(q,p)\|\\
    &< \dfrac{1}{\sqrt{p^2+q^2}-\epsilon} + \epsilon\\
    &<\epsilon +1
\end{align*}

\end{proof}

Notice that the smaller the $\epsilon$ is, the fewer the pairs  $(r,s)$ of relatively prime numbers satisfying the condition of Proposition~\ref{aprox}. For example, for $p=300$,  $q=21$ and $\epsilon=1$, then $(299,21)$ is the only such pair. Therefore the size of $\epsilon$ has to increase to obtain more coprime pairs within $\epsilon$ of $(p,q)$; however, increasing the size of $\epsilon$ will worsen our approximation of the B\'ezier quadratic.

The following theorem is our main result. It establishes the approximation of the B\'ezier quadratic curve $c_{p,q}$  in Definition~\ref{Bezier} through segments having as endpoints pairs of B\'ezout coefficients. More precisely, it establishes conditions for a point in  the  B\'ezier-B\'ezout segment  $L_{r,s}$ to be close to a point of the quadratic B\'ezier  curve $c_{p,q}$.

\begin{theorem}
 Let $p>3$ and $0\leq q<p$ be 
 integers 
 and let  $1< \epsilon\leq \frac{1}{2} \|(p,q)\|$ be a real number. Consider $c_{p,q}$ the quadratic B\'ezier curve corresponding to $P_0=(p,q)$, $P_1=(0,0)$ and $P_2=(q,p)$ defined in \ref{Bezier}. If $(r,s)$ is a pair of positive coprime numbers and   $\|(r,s)-(p,q)\|\leq \epsilon-1$  then there exits $0< t_{r,s}< 1$  such that 
 \begin{equation*}
     \|L_{r,s}(t_{r,s})- c_{p,q}(t_{r,s})\|<\epsilon,
 \end{equation*}
where $L_{r,s}(t)$ is the B\'ezier-B\'ezout segment corresponding to $(r,s)$ defined in \ref{bezoutlines}.

\end{theorem}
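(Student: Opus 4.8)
The plan is to localize everything at the single parameter
\[
t_0=1-\frac{\B(r,s)\cdot(r,s)}{\|(r,s)\|^2}
\]
already appearing in Proposition~\ref{aprox}, and to set $t_{r,s}=t_0$. So fix a coprime pair $(r,s)$ with $\|(r,s)-(p,q)\|\le\epsilon-1$ (if there is no such pair the statement is vacuous). Because $(r,s)$ is coprime, the first part of the proof of Proposition~\ref{aprox} gives $0<t_0<1$ without using any bound on $\|(r,s)-(p,q)\|$. One cannot simply quote Proposition~\ref{aprox}: its conclusion carries a stray $+1$, and its hypothesis would require $\epsilon-1\ge1$, which we do not have. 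Instead I would re-run its estimate with the sharper input $\|(r,s)-(p,q)\|\le\epsilon-1$. Exactly as in that proof — using Proposition~\ref{prop} and Proposition~\ref{mainprop}~(2) and~(3), together with the reverse triangle inequality $\sqrt{r^2+s^2}\ge\sqrt{p^2+q^2}-(\epsilon-1)$ — one gets
\[
\|\B(r,s)-\alpha_{p,q}(t_0)\|\le\frac{1}{\sqrt{r^2+s^2}}+(1-t_0)\|(r,s)-(p,q)\|\le\frac{1}{\sqrt{p^2+q^2}-(\epsilon-1)}+(\epsilon-1),
\]
and in the same way $\|\B(s,r)-\beta_{p,q}(t_0)\|\le\frac{1}{\sqrt{p^2+q^2}-(\epsilon-1)}+(\epsilon-1)$. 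The point of the new hypotheses is that $p>3$ and $\epsilon\le\frac12\|(p,q)\|$ yield $\sqrt{p^2+q^2}-(\epsilon-1)\ge\frac12\|(p,q)\|+1\ge3$, so $\frac{1}{\sqrt{p^2+q^2}-(\epsilon-1)}<1$ and both distances are strictly less than $1+(\epsilon-1)=\epsilon$.

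It then remains to assemble these two estimates. By Definition~\ref{Bezier} one has $c_{p,q}(t_0)=(1-t_0)\alpha_{p,q}(t_0)+t_0\beta_{p,q}(t_0)$ — indeed $t_0$ is precisely the parameter at which the tangent line $\gamma_{t_0}$ touches $c_{p,q}$ — while by Definition~\ref{bezoutlines} one has $L_{r,s}(t_0)=(1-t_0)\B(r,s)+t_0\B(s,r)$. Subtracting term by term and applying the triangle inequality,
\[
\|L_{r,s}(t_0)-c_{p,q}(t_0)\|\le(1-t_0)\,\|\B(r,s)-\alpha_{p,q}(t_0)\|+t_0\,\|\B(s,r)-\beta_{p,q}(t_0)\|<(1-t_0)\epsilon+t_0\epsilon=\epsilon,
\]
so $t_{r,s}=t_0$ works and the proof is complete.

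One could instead route the last display through Proposition~\ref{points}, applied to the segments $L_{r,s}$ and $\gamma_{t_0}$ and evaluated at $t=t_0$, where $\gamma_{t_0}(t_0)=c_{p,q}(t_0)$; but that would first require checking the ``$\min$'' hypotheses of Proposition~\ref{points}, i.e.\ that $\B(r,s)$ is nearer to $\alpha_{p,q}(t_0)$ than to $\beta_{p,q}(t_0)$ and symmetrically for $\B(s,r)$, which the direct convex-combination estimate sidesteps. The only step that genuinely needs care is the upgrade of Proposition~\ref{aprox}: recognizing that it must be re-derived from the tighter hypothesis $\|(r,s)-(p,q)\|\le\epsilon-1$ rather than quoted, and that $p>3$ together with $\epsilon\le\frac12\|(p,q)\|$ is exactly what forces $\frac{1}{\sqrt{p^2+q^2}-(\epsilon-1)}<1$, allowing the extra $+1$ to be absorbed into the radius $\epsilon-1$. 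Everything else is a single triangle inequality on a convex combination.
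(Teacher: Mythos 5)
Your proof is correct and follows the same overall strategy as the paper: choose the parameter $t_0=1-\B(r,s)\cdot(r,s)/\|(r,s)\|^2$ coming from Proposition~\ref{aprox}, show the endpoints $\B(r,s)$ and $\B(s,r)$ of $L_{r,s}$ are within $\epsilon$ of $\alpha_{p,q}(t_0)$ and $\beta_{p,q}(t_0)$, and then pass from endpoint proximity to proximity of the interpolated points, using that $\gamma_{t_0}(t_0)=c_{p,q}(t_0)$. Where you diverge is in the bookkeeping, and both divergences are refinements rather than a new idea. The paper simply cites Proposition~\ref{aprox} to get the endpoint distances below $\epsilon$; read literally, applying that proposition at radius $\epsilon-1$ requires $\epsilon-1\ge 1$, which the theorem's hypothesis $\epsilon>1$ does not guarantee, while applying it at radius $\epsilon$ only yields the weaker bound $\epsilon+1$. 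Your re-running of its estimate with the input $\|(r,s)-(p,q)\|\le\epsilon-1$, using $p>3$ and $\epsilon\le\frac12\|(p,q)\|$ to force $\frac{1}{\sqrt{p^2+q^2}-(\epsilon-1)}<1$, closes this small gap explicitly. Second, the paper routes the conclusion through $\mathrm{dist}(L_{r,s},\gamma_{t_{r,s}})<\epsilon$ and Proposition~\ref{points}, whose hypotheses about which endpoints realize the minima are never checked there; your single convex-combination triangle inequality at $t=t_0$ needs only the two endpoint estimates and sidesteps that verification entirely (it is, in effect, the proof of Proposition~\ref{points} specialized to the matched pairing). The paper's route buys brevity by citation; yours is self-contained and is airtight precisely at the two places where the paper's write-up is loose.
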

\begin{proof}
    Suppose that $r$ and $s$ are relatively prime numbers such that $\| (r,s)-(p,q)\|\leq \epsilon -1$. By Proposition~\ref{aprox}, there exists $t_{r,s}$ between 0 and 1 such that the distances from the points $\B(r,s)$ and $\B(s,r)$ to $\alpha_{p,q}(t_{r,s})$ and $\beta_{p,q}(t_{r,s})$, respectively, are both less than $\epsilon$. But the B\'ezier-B\'ezout line $L_{r,s}$ has end points $\B(r,s)$ and $\B(s,r)$, while the line $\gamma_{t_{r,s}}$ with endpoints on $\alpha_{p,q}(t_{r,s})$ and $\beta_{p,q}(t_{r,s})$ is tangent to the quadratic B\'ezier curve $c_{p,q}$ at $t=t_{r,s}$. So we conclude that $\mathrm{dist}(L_{r,s},\gamma_{t_{r,s}})<\epsilon$. The result now follows from Proposition~\ref{points}.
\end{proof}

In Figure~\ref{fig:envelope} we show  some examples of approximately quadratic B\'ezier curves using $\epsilon=10$.

\begin{figure}
    \centering
\begin{tabular}{c @{\qquad} c }
        \includegraphics[width=60mm,scale=0.2]{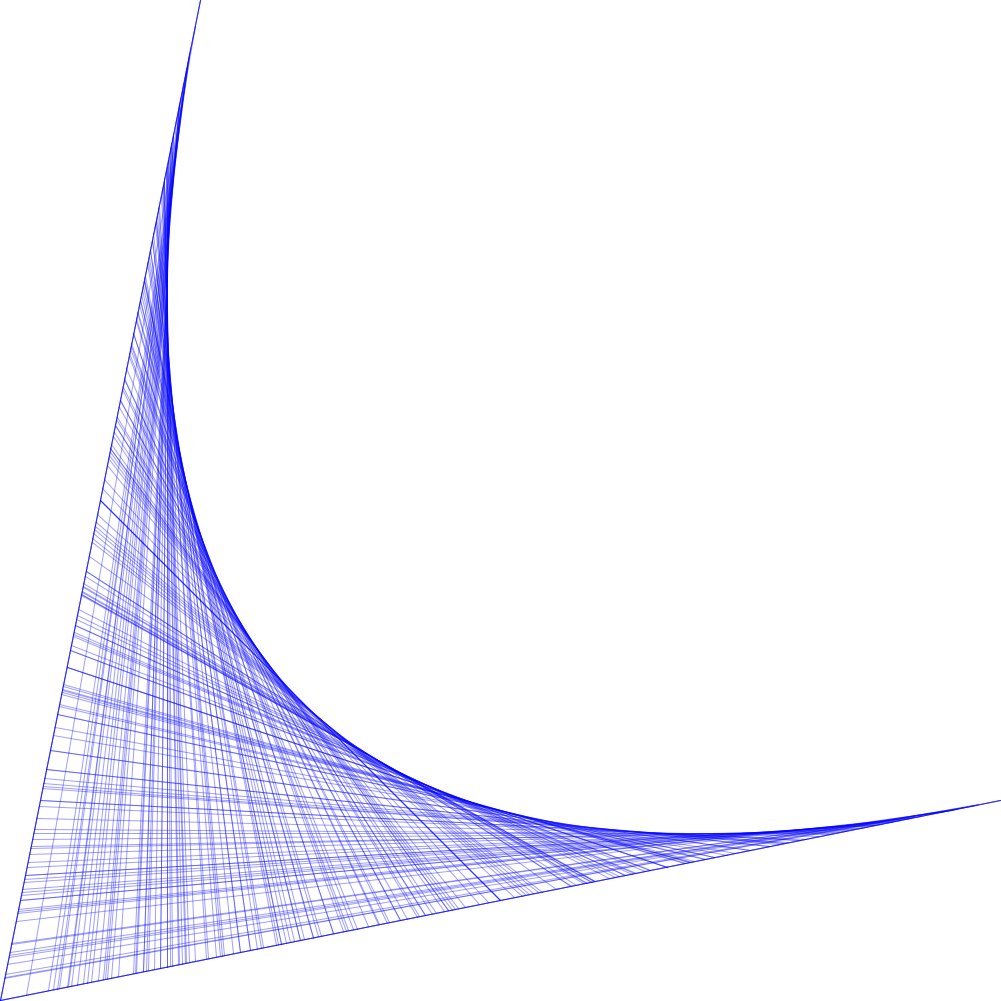}
      & \includegraphics[width=60mm,scale=0.2]{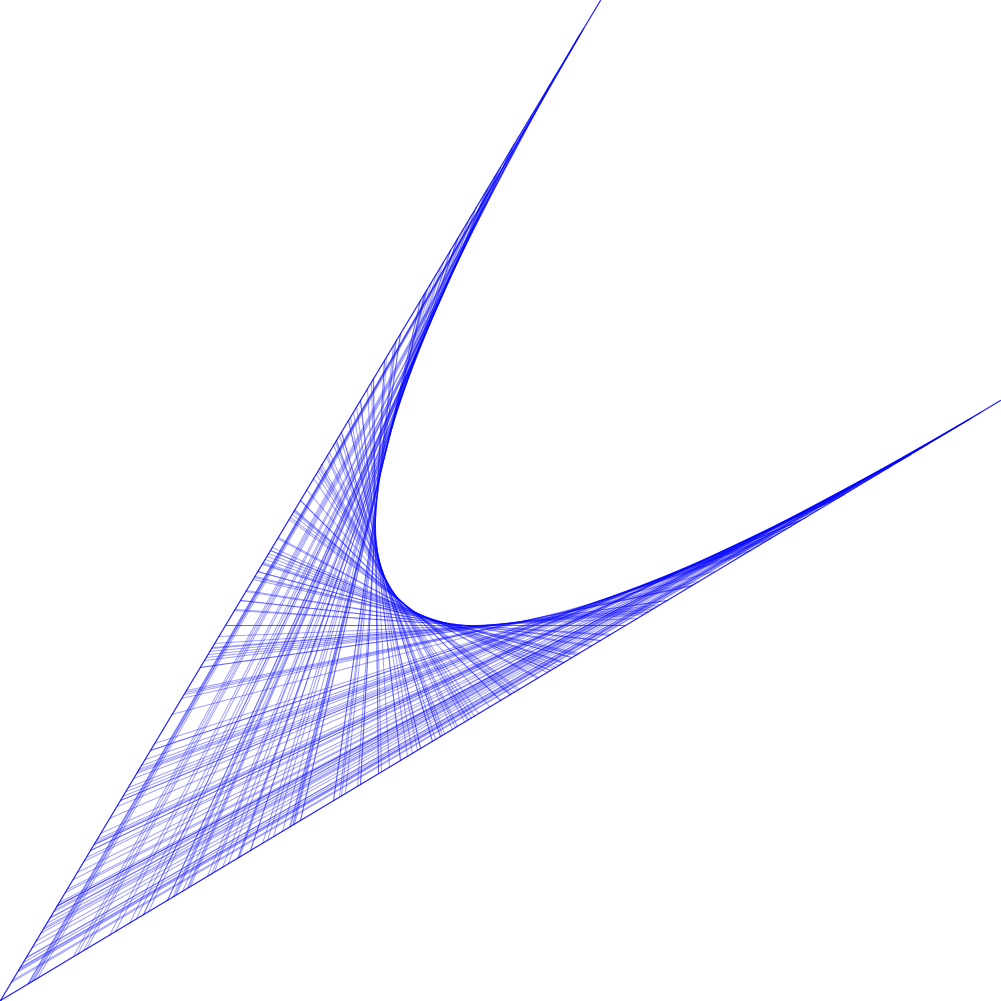}\\
\end{tabular}
    \caption{Approximate B\'ezier quadratic curve $c_{p,q}$ for $(p,q)=(1000000,200000)$ and $\epsilon=10$ (left) and $(p,q)=(1000000,600000)$ and $\epsilon=10$ (right). }
    \label{fig:envelope}
\end{figure}

\end{document}